\documentclass[12pt]{amsart}

\voffset -10truemm

\pdfoutput=1
\usepackage{latexsym}
\usepackage[centertags]{amsmath}
\usepackage{amsfonts}
\usepackage{amssymb}
\usepackage{amsthm}
\usepackage{newlfont}
\usepackage{graphics}
\usepackage{color}
\usepackage{wrapfig}
\usepackage[latin1]{inputenc}
\usepackage[usenames,dvipsnames]{xcolor}
\usepackage{graphicx}

\usepackage{wrapfig}

\parskip 5pt
\parindent 2em
\newtheorem{theo}{Theorem}

\newtheorem{lem} [theo]{Lemma}
\newtheorem{cor}[theo]{Corollary}
\newtheorem{prop}[theo]{Proposition}

\newtheorem{ques}[theo]{Question}

\makeatletter \@addtoreset{equation}{section}
\@addtoreset{theo}{}\makeatother

\setlength{\topmargin}{-5mm} \setlength{\oddsidemargin}{0.2cm}
\setlength{\evensidemargin}{0.2cm} \setlength{\textwidth}{15.8cm}
\setlength{\textheight}{22.42cm}

\textheight=24cm 

\def\N{\mathbb{N}}
\def\Z{\mathbb{Z}}



\def\R{\mathbb{R}}

\def\a{\alpha}
\def\b{\beta}
\def\x{\mathbf{x}}

\title{Enumerating Magic Distinct Labellings of the Cube}

\author{Guoce Xin$^{1,*}$, Yingrui Zhang$^2$ and Zihao Zhang$^3$}

 \address{ $^{1,2,3}$School of Mathematical Sciences, Capital Normal University,
 Beijing 100048, PR China}
 \email{$^1$\texttt{guoce\_xin@163.com}\ \& $^2$\texttt{zyrzuhe@126.com} \&  $^3$\texttt{zihao-zhang@foxmail.com} }
\date{July 9, 2021}


\begin{document}

\begin{abstract}
We find by applying MacMahon's partition analysis that all magic labellings of the cube are of eight types, each generated
by six basis elements. A combinatorial proof of this fact is given. The number of magic labellings of the cube is thus reobtained
as a polynomial in the magic sum of degree $5$. Then we enumerate magic distinct labellings, the number of which turns out to be a quasi-polynomial of period 720720. We also find the group of symmetry can be used to significantly simplify the computation.
\end{abstract}

\maketitle

\noindent
\begin{small}
 \emph{Mathematic subject classification}: Primary 05A19; Secondary 11D04; 05C78.
\end{small}

\noindent
\begin{small}
\emph{Keywords}: magic labelling; group action; linear Diophantine equations; quasi-polynomials.
\end{small}

\section{Introduction}

Let $G$ be a finite graph. A labelling of $G$ is an assignment of a nonnegative integer (in $\N$)
to each edge of $G$. A magic labelling of $G$ with magic sum $r$  is a labelling such that
for each vertex $v$ of $G$ the sum of the labels of all edges incident to $v$ is equal to $r$ (loops are counted as incident only once). It is called a magic distinct labellings if all labels are distinct.
Graphs with a magic labelling are
also called magic graphs.  Magic graphs were studied in great detail by Stanley in \cite{Stanley-Linear-M_G,Stanley-M_G-S_M_G} and Stewart in \cite{Stewart-M_G,Stewart-S_c_g}.

In this paper, we study magic distinct labellings of the cube. The main difficulty lies in the ``distinct" condition on the labels.
The number of magic labellings of the cube has been enumerated to be a polynomial in the magic sum of degree $5$ \cite{Ahmed-PhD-thesis}, but the distinct case
turns out to be a quasi-polynomial of period 720720. Our starting point is a simple structure
result for magic labellings, from which we are able to extract information about magic distinct labellings.

The cube is a well-known object with 8 vertices, 12 edges (and 6 faces which is irrelevant here), and
each vertex has degree $3$. See Figure \ref{fig:cube}, where we use numbers \{1,...,12\} to mark the 12 edges.
Assume the $i$-th edge has label $x_i$ for $1 \leq i \leq 12$. The labelling is magic
of magic sum $r$ if and only if
\begin{figure}[!ht]
  $$
 \hskip 0.05in\vcenter { \includegraphics[height= 1.56 in]{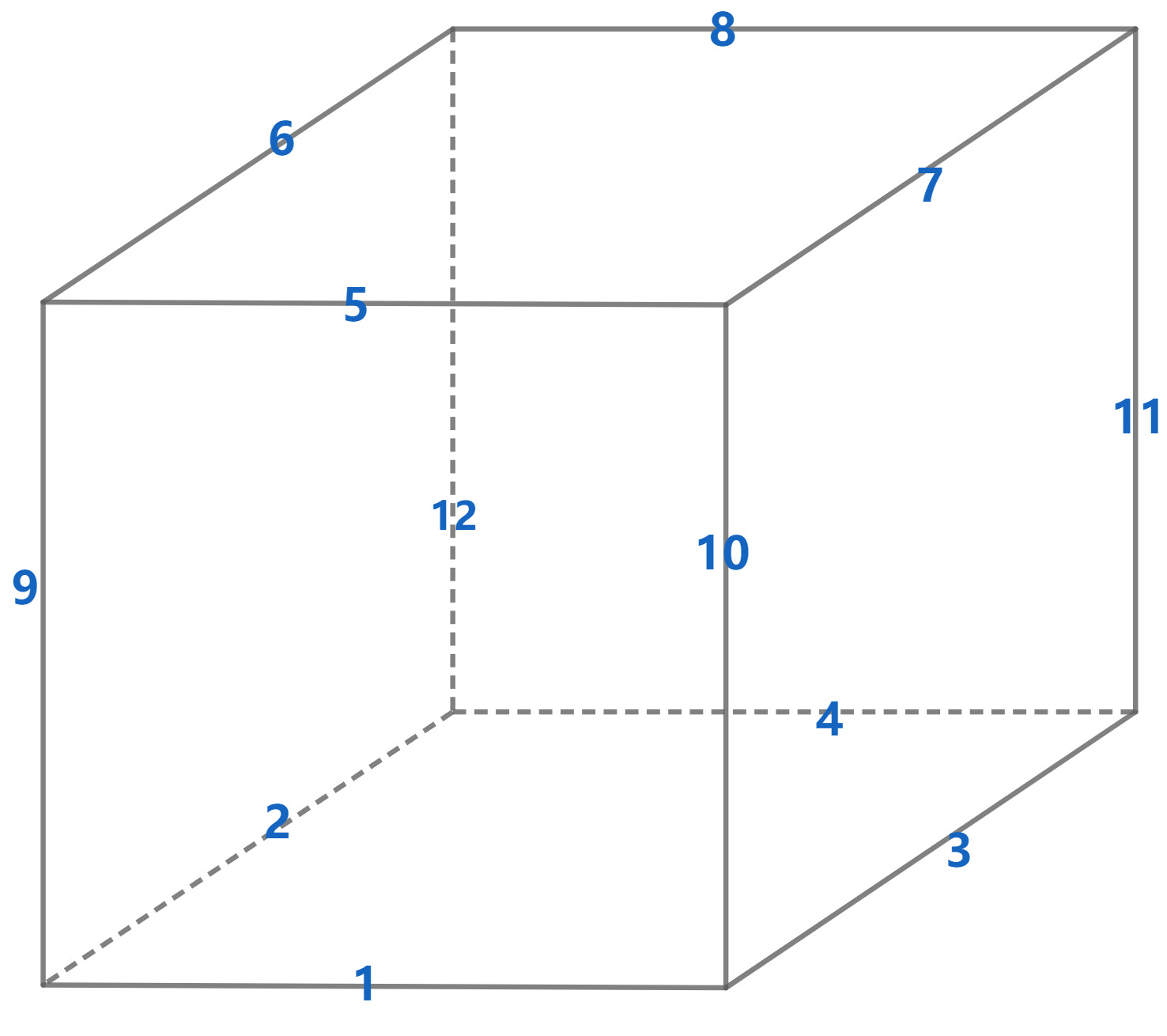}}
$$
\caption{ A cube with its edges marked.
\label{fig:cube}}
\end{figure}
$(\a,r)=(x_1,...,x_{12},r)\in \N^{13} $ is a solution to the following linear system.
\begin{align}\label{e-equation}
  \left\{
\begin{array}{cc}
  \begin{array}{rr}
    x_1+x_2+x_9=r & x_1+x_3+x_{10}=r\\
    x_2+x_4+x_{12}=r & x_3+x_4+x_{11}=r \\
    x_5+x_6+x_9=r & x_5+x_7+x_{10}=r\\
     x_6+x_8+x_{12}=r & x_7+x_8+x_{11}=r\\
  \end{array} & ,
\end{array}
\right.
\end{align}
Denote by $S_\R$ the set of $\R$ solutions of the linear system \eqref{e-equation},
and by $S_\Z=S_\R\cap \Z^{13}$ and similar for $S_\N$. Then $S_\N$ is just the set of magic labellings. Since $r$ is determined by $\a$, we will omit the $r$ by saying that $S_\R\subseteq \R^{12}$ when clear from the
context.

By linear algebra, it is easy to see that $S_\R$ is a subspace of $\R^{12}$ of dimension $6$.
There would be little problem to describe $S_\Z$, since it forms a subgroup of $\mathbb{Z}^{12}$ and is hence a finitely-generated free
abelian group. The number of generators of $S_\mathbb{Z}$ is equal to $6$, and there are well-known algorithm for
finding the generators of $S_{\mathbb{Z}}$ explicitly. The structure of $S_\N$ is not so clear: it is only a monoid (semi-group with a unit).

\begin{ques}\label{q-question}
Can we decompose $S_\N$ into some shifted free monoids?  That is, can we find sets $S_{\N}^1, \dots, S_{\N}^t$,
each of the form
$$S_{\N}^i=\{\a|\a=T_i+k_1\b_{i_1}+\dots+k_6\b_{i_6}, T_i \in S_{\N}, k_j \geq 0, \b_{i_j} \in S_{\N} \text{ are linearly independent} \},$$
such that
$S_{\N}=\biguplus_{i=1}^t S_{\N}^i$, where $\biguplus$ means disjoint union.
\end{ques}

As a reward for getting question \ref{q-question}, it is better to use generating functions. Define
$$G(\x)=G(x_1,\dots,x_{12})=\sum_{\alpha\in S_\N} \x^\a,$$
where if $\alpha=(a_1,\dots,a_{12})$ then $\x^\a=x_1^{a_1}\cdots x_{12}^{a_{12}}$. And define
$G_i(\x)$ similarly for $S_{\N}^i$. Then the decomposition give rise the following.
$$G(\x)=\sum_{i=1}^t G_i(x)= \sum_{i=1}^t \frac {\x^{T_i}} {\prod_{j=1}^6(1-\x^{\b_{i_j}})}, \text{ for } 1 \leq i \leq t,$$
where each $G_i(\x)$ corresponds to a shifted free monoid.

In order to answer question \ref{q-question}, we use unit vectors $e_i \in \N^{12}(1\leq i \leq 12)$
that has digit $1$ at the $i$-th position and has digit $0$ at all the others. For example, $\a=(1,0,0,2,0,1,0,0,0,1,0,0)$ can be
written as $\a=e_1+2e_4+e_6+e_{10}$.

Let
\begin{align}\label{b-base}
  \begin{array}{lll}
    \a_1=e_2+e_3+e_5+e_8, & \a_2=e_1+e_4+e_6+e_7, & \a_3=e_1+e_5+e_{11}+e_{12}, \\
    \a_4=e_2+e_6+e_{10}+e_{11}, & \a_5=e_1+e_4+e_5+e_8, & \a_6=e_4+e_8+e_9+e_{10},
  \end{array}
\end{align}
\begin{align}\label{b-basemore}
  \begin{array}{lll}
    \a_7=e_3+e_7+e_9+e_{12}, & \a_8=e_2+e_3+e_6+e_7, & \a_9=e_9+e_{10}+e_{11}+e_{12},
  \end{array}
\end{align}
\begin{align}\label{b-baseT}
 T=\sum_{i=1}^{12}e_i.
\end{align}
Observe that each $\alpha_i$ corresponds to a perfect matching.
These vectors have the following relations.
\begin{align*}
  \a_1&=\a_3 +\a_4 +\a_6 +\a_ 7-\a_2-2\a_9,\\
  \a_2&=\a_3 +\a_4 +\a_6 +\a_7 -\a_1 -2\a_9, \\
  \a_3&=\a_1 +\a_2 +\a_4 +\a_7 -\a_6 -2\a_8 ,\\
  \a_4& = \a_1 + \a_2+ \a_3 + \a_6- 2\a_5- \a_7,\\
  \a_6&=\a_1+\a_2 +\a_4 +\a_7 - \a_3 -2\a_8 ,\\
  \a_5&=\a_1 +\a_2 -\a_ 8=\a_3 +\a_ 6-\a_ 9= \frac{1}{2} (\a_1+\a_2+\a_3+\a_6-\a_4-\a_7), \\
  T&=\a_1+\a_2+\a_9=\a_3+\a_6+\a_8=\a_4+\a_5+\a_7=\frac{1}{2}(\a_1+\a_2+\a_3+\a_4+\a_6+\a_7).
\end{align*}

%
%

Now, we give the structure result as follows.

\begin{theo}\label{t-main-result}
Every magic labelling of the cube, i.e., $\a \in S_{\N}$, can be written uniquely in one of
the following eight types.
\begin{enumerate}
  \item[t1:] $k_1\a_1+k_2\a_2+k_3\a_3+k_4\a_4+k_5\a_5+k_6\a_6$,
  \item[t2:] $\a_7+k_1\a_1+k_2\a_2+k_3\a_3+k_4\a_5+k_5\a_6+k_6\a_7$,
  \item[t31:] $\a_9+k_1\a_2+k_2\a_3+k_3\a_4+k_4\a_6+k_5\a_7+k_6\a_9$,
  \item[t32:]  $(\a_1+\a_9)+k_1\a_1+k_2\a_3+k_3\a_4+k_4\a_6+k_5\a_7+k_6\a_9$,
  \item[t33:]  $(\a_6+\a_8)+k_1\a_1+k_2\a_2+k_3\a_4+k_4\a_6+k_5\a_7+k_6\a_8$,
  \item[t34:] $\a_8+k_1\a_1+k_2\a_2+k_3\a_3+k_4\a_4+k_5\a_7+k_6\a_8$,
  \item[t351:]  $(\a_4+\a_7)+k_1\a_1+k_2\a_2+k_3\a_3+k_4\a_4+k_5\a_6+k_6\a_7$,
  \item[t352:]  $T+k_1\a_1+k_2\a_2+k_3\a_3+k_4\a_4+k_5\a_6+k_6\a_7$,
\end{enumerate}
where $k_i \in \N, \ 1 \leq i \leq 6$ and $\a_1, \dots, \a_9, T$ are as in $(\ref{b-base}),(\ref{b-basemore}), (\ref{b-baseT})$.
\end{theo}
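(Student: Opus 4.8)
The plan is to prove the equivalent statement that $S_\N$ is the disjoint union of the eight shifted free monoids named t1 through t352, which breaks into three tasks: (i) each listed monoid lies inside $S_\N$; (ii) their union is all of $S_\N$ (existence of a representation); and (iii) the union is disjoint and the representation within each type is unique. Task (i) is immediate, since every $\a_i$ is a perfect matching (magic sum $1$) and $T$ is the all-ones labelling (magic sum $3$), so each $T_i+\sum_j k_j\b_{i_j}$ is again magic. Within a fixed type, uniqueness of $(k_1,\dots,k_6)$ follows once one checks that each listed set $\{\b_{i_j}\}$ of six vectors is $\R$-linearly independent, hence a basis of the $6$-dimensional space $S_\R$; this is a routine rank computation.

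For existence (ii) I would exploit that the cube $Q_3$ is a $3$-regular bipartite graph and that $\a_1,\dots,\a_9$ are precisely its nine perfect matchings. Given $\a\in S_\N$ with magic sum $r$, read $\a$ as an $r$-regular bipartite multigraph on the edges of $Q_3$; by K\"onig's edge-colouring theorem this multigraph is $r$-edge-colourable, so its edges partition into $r$ perfect matchings, each one of the $\a_i$. Hence $\a=\sum_{i=1}^9 c_i\a_i$ with all $c_i\in\N$, and the remaining work is to normalise such a combination into one of the eight prescribed shapes.

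The normalisation rests on the exchange relations, which I would record first:
\[
\a_1+\a_2=\a_5+\a_8,\qquad \a_3+\a_6=\a_5+\a_9,\qquad \a_4+\a_7=\a_8+\a_9,
\]
together with $T=\a_5+\a_8+\a_9=\a_1+\a_2+\a_9=\a_3+\a_6+\a_8=\a_4+\a_5+\a_7$. Reading each of the first three as a rewriting rule that trades a pair from $\{\a_5,\a_8,\a_9\}$ for two matchings outside this set lowers $c_5+c_8+c_9$ by $2$ and so terminates, leaving at most one of $\a_5,\a_8,\a_9$ present. This yields the coarse split into an $\a_5$-world (types t1, t2), an $\a_8$-world (t33, t34), an $\a_9$-world (t31, t32) and a world with none of them (t351, t352); inside each world one further syzygy, e.g. $\a_1+\a_2+\a_3+\a_6=2\a_5+\a_4+\a_7$ in the $\a_5$-world, selects between the two admissible sub-bases and extracts the offset $T_i$.

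I expect the disjointness and uniqueness claim (iii) to be the main obstacle, because the rewriting above is not confluent: the coincidence $T=\a_5+\a_8+\a_9$ means that when $c_5,c_8,c_9$ are all positive the first reduction can be performed in three ways, landing in three different worlds. Reconciling these alternatives is exactly what forces the finer division into t31--t34 and the two distinct offsets $\a_4+\a_7$ and $T$ in t351 and t352. I would settle this in one of two ways. The combinatorial route fixes a deterministic reduction order and then proves the eight resulting shapes pairwise disjoint by attaching to each type an explicit set of inequalities and congruences on the entries of $\a$ and checking that these eight regions tile $S_\N$. The generating-function route instead computes $G(\x)$ directly by MacMahon's partition analysis, eliminating the slack from the eight vertex equations with the $\Omega$ operator, and then verifies the rational identity $G(\x)=\sum_i \x^{T_i}/\prod_j(1-\x^{\b_{i_j}})$; since each summand has coefficients in $\{0,1\}$ (the $\b_{i_j}$ being independent) and support inside $S_\N$, while $G$ has all coefficients equal to $1$ on $S_\N$, matching the two sides forces the eight monoids to be disjoint with union $S_\N$, dispatching (ii) and (iii) at once. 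The latter route is the more mechanical and reliable; the former is the genuinely combinatorial argument and the more delicate.
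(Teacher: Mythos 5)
Your overall architecture (inclusion of each shifted monoid, within-type uniqueness from linear independence, then existence plus disjointness) is the right way to read the theorem, and two of your ingredients are correct and genuinely different from the paper's. The K\"onig edge-colouring argument --- viewing a magic labelling of sum $r$ as an $r$-regular bipartite multigraph and decomposing it into $r$ perfect matchings, so that every $\a\in S_\N$ equals $\sum_{i=1}^9 c_i\a_i$ with $c_i\in\N$ --- is a clean substitute for the paper's starting point, which instead writes $\a$ in the real basis $\a_1,\dots,\a_6$ and characterizes membership in $S_\N$ by the integrality and sign conditions C1--C3 of Lemma \ref{l-condition}. Your exchange relations are all correct (they are equivalent to the relations listed after \eqref{b-baseT}), and your observation that the identity of Corollary \ref{c-corollary}, once verified, forces the theorem --- because each $G_i$ has $0/1$ coefficients supported in $S_\N$ while $G$ is the $0/1$ indicator series of $S_\N$ --- is valid.

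The gap is that neither of your two routes for the decisive step (existence of a representation together with disjointness across types) is carried out, and the combinatorial one fails as sketched. Concretely: $\a_4+\a_5+\a_7$ is irreducible under your three exchange rules (only $\a_5$ from $\{\a_5,\a_8,\a_9\}$ occurs, and your further syzygy needs $c_5\ge 2$ to fire), so by your world assignment it should be of type t1 or t2; but no type admits $\a_4$, $\a_5$, $\a_7$ simultaneously, and in fact $\a_4+\a_5+\a_7=T$ has unique type t352. So the claimed correspondence between worlds and pairs of types is false without additional world-crossing rules ($\a_4+\a_5+\a_7\to T$, $\a_1+\a_2+\a_9\to T$, $\a_3+\a_6+\a_8\to T$, \dots), and once these are added the non-confluence you flag reappears in full force. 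Resolving it is not an afterthought: it is the entire content of the paper's proof, which fixes the coordinates $q_1,\dots,q_6$ of Lemma \ref{l-condition}, cuts the parameter space into regions P1, P2, P31--P34, P35 (the last split by the parity of $q_5$), exhibits an explicit substitution carrying each region onto one type, and obtains disjointness from the converse check that each type lands back in its defining region. Your Route B is logically sound but is left as an unperformed computer-algebra verification of $G=\sum_i G_i$; that is essentially how the authors confirm Corollary \ref{c-corollary} computationally, and it would be a legitimate (if non-combinatorial) alternative proof, but until that identity is actually checked, or Route A's case analysis actually done, the proposal remains a plan whose central step is missing.
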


\begin{cor}\label{c-corollary}
The generating function of $S_\N$ can be decomposed as
\begin{align}\label{G-generatingfunction}
  G(\x,y)=\sum_{\a \in S_{\N}}\x^{\a}y^{r(\alpha)}=\sum_{i=1}^8G_i(\x,y),
\end{align}
 where $G(\x)=G(\x,1)$ and
\begin{itemize}
 \item $G_1(\x,y)= \frac 1 {(1-\x^{\a_1}y){(1-\x^{\a_2}y)}{(1-\x^{\a_3}y)}{(1-\x^{\a_4}y)}{(1-\x^{\a_5}y)}{(1-\x^{\a_6}y)}}$
  \item $G_2(\x,y)=   \frac {\x^{\a_7}y} {(1-\x^{\a_1}y){(1-\x^{\a_2}y)}{(1-\x^{\a_3}y)}{(1-\x^{\a_5}y)}{(1-\x^{\a_6}y)}{(1-\x^{\a_7}y)}}$
  \item $G_3(\x,y)= \frac {\x^{\a_9}y} {(1-\x^{\a_2}y){(1-\x^{\a_3}y)}{(1-\x^{\a_4}y)}{(1-\x^{\a_6}y)}{(1-\x^{\a_7}y)}{(1-\x^{\a_9}y)}} $
  \item $G_4(\x,y)= \frac {\x^{\a_1 +\a_9}y^2} {(1-\x^{\a_1}y){(1-\x^{\a_3}y)}{(1-\x^{\a_4}y)}{(1-\x^{\a_5}y)}{(1-\x^{\a_6}y)}{(1-\x^{\a_9}y)}}$
  \item $G_5(\x,y)= \frac {\x^{\a_6 +\a_8}y^2} {(1-\x^{\a_1}y){(1-\x^{\a_2}y)}{(1-\x^{\a_4}y)}{(1-\x^{\a_6}y)}{(1-\x^{\a_7}y)}{(1-\x^{\a_8}y)}}$
  \item $G_6(\x,y)=  \frac {\x^ {\a_8}y} {(1-\x^{\a_1}y){(1-\x^{\a_2}y)}{(1-\x^{\a_3}y)}{(1-\x^{\a_4}y)}{(1-\x^{\a_7}y)}{(1-\x^{\a_8}y)}} $
  \item $G_7(\x,y)= \frac {\x^{\a_4+\a_7}y^2} {(1-\x^{\a_1}y){(1-\x^{\a_2}y)}{(1-\x^{\a_3}y)}{(1-\x^{\a_4}y)}{(1-\x^{\a_6}y)}{(1-\x^{\a_7}y)}}$
  \item $G_8(\x,y)= \frac {\x^Ty^3} {(1-\x^{\a_1}y){(1-\x^{\a_2}y)}{(1-\x^{\a_3}y)}{(1-\x^{\a_4}y)}{(1-\x^{\a_6}y)}{(1-\x^{\a_7}y)}}$.
\end{itemize}
\end{cor}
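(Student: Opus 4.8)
The plan is to derive Corollary 2 directly from the structure result in Theorem 1 by translating the combinatorial decomposition of the monoid $S_\N$ into the language of generating functions. The statement of Theorem 1 asserts that $S_\N$ is the disjoint union of eight pieces, each being a \emph{shifted free monoid}: for each type $ti$ there is a fixed shift vector $T_i \in S_\N$ (namely $0$, $\a_7$, $\a_9$, $\a_1+\a_9$, $\a_6+\a_8$, $\a_8$, $\a_4+\a_7$, and $T$ respectively) together with six linearly independent generators $\b_{i_1},\dots,\b_{i_6}$ drawn from $\{\a_1,\dots,\a_9\}$, such that every element of the $i$-th piece is written \emph{uniquely} as $T_i + \sum_{j=1}^6 k_j \b_{i_j}$ with $k_j \in \N$. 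This is precisely the setting of Question 1, so the corollary is the formal generating-function consequence of an affirmative answer to that question.

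First I would recall the elementary identity for a free monoid: if $\b_1,\dots,\b_6$ are linearly independent, then the generating function of the set $\{\sum_j k_j\b_j : k_j\in\N\}$ factors as $\prod_{j=1}^6 (1-\x^{\b_j})^{-1}$, which follows from expanding each geometric series $\sum_{k_j\ge 0}\x^{k_j\b_j}=(1-\x^{\b_j})^{-1}$ and multiplying; linear independence guarantees that distinct $(k_1,\dots,k_6)$ give distinct monomials $\x^{\sum k_j\b_j}$, so there is no collapsing of terms. Shifting the whole monoid by $T_i$ multiplies its generating function by $\x^{T_i}$, giving $\x^{T_i}\prod_{j=1}^6(1-\x^{\b_{i_j}})^{-1}$ for the $i$-th piece. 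Summing over the eight pieces and invoking the disjointness of the union from Theorem 1 — which ensures that $\sum_{i=1}^8 G_i(\x) = \sum_{\a\in S_\N}\x^\a = G(\x)$ with no monomial counted twice — yields the claimed decomposition. The eight summands match the displayed $G_i$ exactly once one reads off each shift vector and each six-element generating set from the corresponding type in Theorem 1.

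To incorporate the extra variable $y$ tracking the magic sum $r(\a)$, I would use that $r$ is a linear functional on $S_\N$: from the system \eqref{e-equation} one checks that $r(\a_i)=1$ for each basis matching $\a_i$ (each perfect matching contributes exactly one edge at every vertex, so its magic sum is $1$), and hence $r$ is additive, $r\bigl(T_i+\sum_j k_j\b_{i_j}\bigr)=r(T_i)+\sum_j k_j$. Consequently each generator contributes a factor $y$ alongside its monomial, turning $(1-\x^{\b_{i_j}})^{-1}$ into $(1-\x^{\b_{i_j}}y)^{-1}$, and the shift $T_i$ contributes $y^{r(T_i)}$. Reading off $r(T_i)$ for the eight shifts gives exponents $y^0,y^1,y^1,y^2,y^2,y^1,y^2,y^3$, which are exactly the powers of $y$ appearing in $G_1,\dots,G_8$; setting $y=1$ recovers $G(\x)$.

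I expect the proof itself to be short, since all the real work is already packaged in Theorem 1; the only points needing care are bookkeeping rather than mathematical obstacles. The main thing to verify is that the shift vectors and generator sets I read off the eight types of Theorem 1 agree term-by-term with the eight rational functions $G_1,\dots,G_8$ in the corollary, and in particular that the $y$-exponents $r(T_i)$ are computed correctly (for instance $r(T)=3$ because $T=\sum_i e_i$ assigns every edge label $1$, giving vertex sum $3$). No genuine difficulty arises from convergence or from overlap between the eight pieces, because the disjointness is an input from Theorem 1; thus the corollary is essentially a restatement of that theorem in generating-function form.
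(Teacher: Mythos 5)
Your overall strategy is exactly the derivation the paper intends (the paper prints no separate proof of the corollary): Theorem \ref{t-main-result} gives a disjoint decomposition of $S_\N$ into eight shifted free monoids, the geometric-series expansion of each factor plus linear independence of each type's six generators gives the product formula for each piece, and linearity of the magic sum (each $\a_i$ is a perfect matching so $r(\a_i)=1$, while $r(T)=3$) produces the $y$-powers. All of that is correct and matches the paper's logic.

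However, your final step --- the claim that the shift vectors and generator sets read off from Theorem \ref{t-main-result} ``match the displayed $G_i$ exactly'' --- fails for $i=4$, and this is not a harmless slip. Type t32 has generators $\a_1,\a_3,\a_4,\a_6,\a_7,\a_9$ (the paper's P32 computation confirms this: it substitutes out $\a_2$ and $\a_5$, leaving a $q_2$-multiple of $\a_7$), whereas the printed $G_4$ contains the factor $(1-\x^{\a_5}y)$ where $(1-\x^{\a_7}y)$ should stand. These give genuinely different series: $\a_7$ is not a nonnegative integer combination of $\a_1,\a_3,\a_4,\a_5,\a_6,\a_9$ (none of those vectors uses edge $7$), and conversely $\a_5$ is not one of $\a_1,\a_3,\a_4,\a_6,\a_7,\a_9$ (only $\a_3$ among them uses edge $1$, and $\a_5-\a_3$ has negative entries). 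Worse, with the printed $G_4$ the identity $G=\sum_i G_i$ is actually false: since $\a_1+\a_9+\a_5=\a_1+\a_3+\a_6$, the monomial $\x^{\a_1+\a_3+\a_6}y^3$ would be counted both by $G_1$ and by the printed $G_4$. So Theorem \ref{t-main-result} and Corollary \ref{c-corollary} as printed are inconsistent; the theorem's proof supports $\a_7$, so the typo is in $G_4$. Your argument, carried out honestly, proves the corollary with $(1-\x^{\a_7}y)$ in the denominator of $G_4$; by asserting the term-by-term match instead of performing it, you missed exactly the bookkeeping check your own last paragraph identifies as the one point needing care.
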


%
%

The paper is organized as follows. In this introduction, we have introduced the basic
concepts and our main result.
In Section \ref{s-combinatorial}, we give a combinatorial proof of Theorem \ref{t-main-result}.
In Section \ref{sec-Magic-distinct}, we enumerate magic distinct labellings of the cube in two ways using MacMahon's partition analysis.
One way is by direct computation and divide and conquer; the other way is by using group of symmetry to simplify the computation.
Now the enumeration result can be reproduced in 3 minutes.


\section{A Combinatorial Proof for Magic Labellings\label{s-combinatorial}}

In this section, we will give a combinatorial proof of Theorem \ref{t-main-result}. The $T$ and $\a_i$'s are all as in $(\ref{b-base}), (\ref{b-basemore}), (\ref{b-baseT})$.
To prove Theorem \ref{t-main-result}, we need two lemmas for checking if a given
$\a \in S_\R$ belongs to $S_{\N}$ when it is written using the basis $\a_1,\dots, \a_6$ as
\begin{align}\label{alpha-formula}
  \a= q_1 \a_1 +q_2 \a_2+q_3 \a_3+q_4 \a_4+q_5 \a_5+q_6 \a_6, \qquad q_1,\dots, q_6 \in \R.
\end{align}

\begin{lem}\label{l-condition}
For $\a$ satisfying $(\ref{alpha-formula})$, the condition $\a \in S_{\N}$ is equivalent to the following conditions
on the $q_i$'s.
\begin{description}
    \item[C1] $q_1,q_2,q_3,q_6 \in \N;$
    \item[C2] $(q_1+q_4),(q_2+q_4),(q_3+q_4),(q_6+q_4) \in \N;$
    \item[C3] $q_1+q_3+q_5, q_1+q_5+q_6, q_2 +q_3+q_5, q_2+q_5+ q_6 \in \N.$
\end{description}
\end{lem}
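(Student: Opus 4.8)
The plan is to simply read off the twelve coordinates of $\a$ as linear forms in $q_1,\dots,q_6$ and to observe that these forms partition exactly into the three groups C1, C2, C3. Since $\a_1,\dots,\a_6$ form a basis of the $6$-dimensional space $S_\R$, every $\a$ of the form $(\ref{alpha-formula})$ automatically lies in $S_\R$; hence the only content of ``$\a\in S_\N$'' is that each of the twelve edge-labels $x_i$ is a nonnegative integer, i.e.\ $x_i\in\N$ for all $1\le i\le 12$. So the whole lemma reduces to expressing the $x_i$ through the $q_j$ and sorting the resulting conditions.

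First I would expand $\a=\sum_{j=1}^6 q_j\a_j$ coordinatewise, using the matchings in $(\ref{b-base})$ to see which $\a_j$ covers each edge. This gives the four ``single-cover'' edges $x_3=q_1$, $x_7=q_2$, $x_{12}=q_3$, $x_9=q_6$; the four edges covered by $\a_4$ together with one other basis element, namely $x_2=q_1+q_4$, $x_6=q_2+q_4$, $x_{11}=q_3+q_4$, $x_{10}=q_6+q_4$; and the four edges covered by $\a_5$ together with two others, namely $x_5=q_1+q_3+q_5$, $x_8=q_1+q_5+q_6$, $x_1=q_2+q_3+q_5$, $x_4=q_2+q_5+q_6$.

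The key observation is that these twelve linear forms are literally the twelve quantities listed in C1, C2, C3: the single-cover group is C1, the $\a_4$-group is C2, and the $\a_5$-group is C3, with every coordinate appearing in exactly one group. Therefore the conjunction ``$x_i\in\N$ for all $i$'' is word-for-word the conjunction of C1, C2, and C3, which is precisely the asserted equivalence.

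There is essentially no analytic obstacle here beyond careful bookkeeping. The one thing that must be verified is that the partition is both exhaustive and disjoint, i.e.\ that the four single-cover edges, the four $\a_4$-edges, and the four $\a_5$-edges together account for all of $x_1,\dots,x_{12}$ with no coordinate counted twice and none omitted. Once the coordinate table is written down this is immediate, and the lemma follows. I would present the computation as a short table rather than twelve separate displayed lines.
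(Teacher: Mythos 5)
Your proposal is correct and follows essentially the same route as the paper: expand $\a=\sum_j q_j\a_j$ coordinatewise and observe that the twelve resulting linear forms are exactly the twelve quantities in C1, C2, C3, so ``$\a\in S_\N$'' is literally their conjunction. Your coordinate table matches the paper's expansion, and your added remark that membership in $S_\R$ is automatic (since the $\a_j$ form a basis of $S_\R$) is a point the paper leaves implicit.
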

\begin{proof}
We have
\begin{align*}
\a=& q_1 \a_1 +q_2 \a_2+q_3 \a_3+q_4 \a_4+q_5 \a_5+q_6 \a_6 \\
  =&(q_2 +q_3+q_5)e_1+ (q_1+q_4)e_2+q_1e_3+(q_2+q_5+ q_6) e_4 +(q_1+q_3+q_5)e_5+(q_2+q_4)e_6 \\
   &+q_2 e_7+(q_1+q_5+q_6)e_8+q_6 e_9+(q_4+q_6)e_{10}+(q_3+q_4)e_{11}+q_3 e_{12}.
\end{align*}
The lemma clearly follows since $\a\in S_\N$ if and only if each coordinate belongs to $\N$.
\end{proof}

\begin{lem}\label{l-condition+}
If $\alpha$ can be written as
\begin{align}
  \alpha= T_0+k_1\alpha_1+k_2\alpha_2+\cdots+k_9\alpha_9,
\end{align}
where $T_0$ can be easily checked to be in $S_\N$ and $k_i\in \N, \ 1\le i\le 9$, then
$\alpha\in S_\N$.
\end{lem}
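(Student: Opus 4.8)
The plan is to deduce the statement from the monoid structure of $S_\N$ together with the observation, already recorded when the vectors were introduced, that each $\a_i$ is a perfect matching. First I would verify that each $\a_i$ ($1 \le i \le 9$) lies in $S_\N$. Since the four edges whose unit vectors sum to $\a_i$ form a perfect matching of the cube, they cover every vertex exactly once; hence the labelling assigning $1$ to these four edges and $0$ to the remaining eight has incident-edge-sum equal to $1$ at every vertex, as one reads off directly from system \eqref{e-equation}. Thus $\a_i$ is a magic labelling of magic sum $1$, so $\a_1, \dots, \a_9 \in S_\N$.

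Next I would record the elementary but essential closure property of $S_\N$. If $\b, \gamma \in S_\N$ have magic sums $r_\b$ and $r_\gamma$, then $\b + \gamma \in \N^{12}$ and, since each defining equation of \eqref{e-equation} is linear, every vertex-sum of $\b+\gamma$ equals $r_\b + r_\gamma$; hence $\b + \gamma \in S_\N$ with magic sum $r_\b + r_\gamma$. Iterating, for any $\b \in S_\N$ and $k \in \N$ the multiple $k\b$ is a sum of $k$ copies of $\b$ and again lies in $S_\N$. In short, $S_\N$ is a monoid closed under nonnegative-integer linear combinations.

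Finally I would assemble the two ingredients. By hypothesis $T_0 \in S_\N$, and by the first step each $k_i \a_i \in S_\N$ with $k_i \in \N$; therefore $\a = T_0 + \sum_{i=1}^{9} k_i \a_i$ is a finite sum of elements of $S_\N$ and so lies in $S_\N$ by the closure just established.

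The only place where any genuine content enters is the first step, namely checking that each $\a_i$ is a perfect matching (equivalently, a magic labelling of magic sum $1$). This is the main obstacle in principle, although it is entirely mechanical given the incidence data in \eqref{e-equation} and was already flagged when the $\a_i$ were defined; everything after it is the general monoid structure of $S_\N$ and requires no computation.
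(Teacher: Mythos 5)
Your proposal is correct and follows the same route as the paper: the paper's proof simply notes that each $\a_i\in S_\N$ (having already observed that each $\a_i$ corresponds to a perfect matching, hence is a magic labelling of magic sum $1$) and that $S_\N$ is a monoid, so the sum $T_0+\sum_{i=1}^{9}k_i\a_i$ stays in $S_\N$. You have merely written out in full the verification and the closure argument that the paper compresses into one sentence.
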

The lemma clearly follows since $\alpha_i\in S_\N$ for $1\le i \le 9$. Indeed, the $\alpha_i$'s are the
extreme rays of $S_\N$. See \cite{Stanley-vol-1} for further concepts.


Now we are ready to outline the idea of the proof
of Theorem \ref{t-main-result}:
According to Lemma \ref{l-condition}, $q_1,q_2,q_3,q_6 \in \N$, so it suffices to characterize $q_4$ and $q_5$.
It is easy to see that $q_4,q_5 \in \Z$. Therefore we need to divide this condition into small pieces so that each piece
has a combinatorial interpretation. Indeed, the types are named after our decompositions.

\begin{proof}[Proof of Theorem \ref{t-main-result}]
Throughout this proof, $\alpha$ is always written as in formula \eqref{alpha-formula}.
By Lemma \ref{l-condition}, we can always assume that $q_1,q_2,q_3,q_6 \in \N$, and $q_4,q_5 \in \Z$.

We first decompose $S_\N$ into small pieces.
By Figure \ref{fig:basic} we divide $S_\N$ into three disjoint parts according to
\begin{description}
  \item[P1] $q_4,q_5 \in \N;$
  \item[P2] $q_4 < 0, q_5 \geq 2q_4;$
  \item[P3] $q_5 <0, q_5 < 2q_4.$
\end{description}
If we denote by $P(S_\N)=\{\alpha\in S_\N: \alpha \text{ satisfies }P\}$, then $S_\N= P1(S_\N)\biguplus P2(S_\N) \biguplus P3(S_\N)$.

\begin{figure}[!ht]
  $$
 \hskip 0.05in\vcenter { \includegraphics[height= 1.96 in]{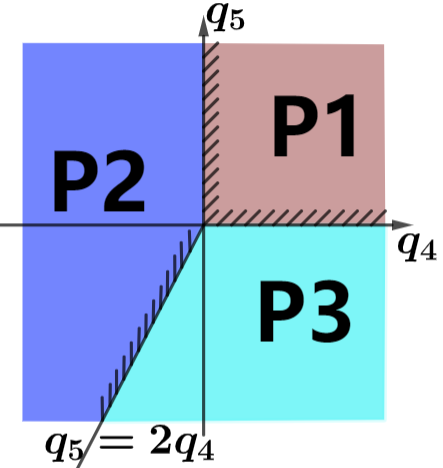}}
$$
\caption{Cut the $(q_4,q_5)$-plane into three disjoint regions.
\label{fig:basic}}
\end{figure}

We further decompose $P3(S_\N)$ into small pieces. Now condition P3 holds true.
By condition C2, it is easy to see that $2q_1+q_5, 2q_2+q_5, 2q_3+q_5$ and $2q_6+q_5 \in \Z$.
We use two rectangular coordinate systems to form a four-dimensional space in Figure \ref{fig:basic1}.
The regions are as follows
\begin{description}
  \item[A1] $2q_1+q_5 \geq 0$ and $2q_2+q_5 \geq 0;$
  \item[B1] $2q_1+q_5 < 0$ or $2q_2+q_5 < 0;$
  \item[A2] $2q_3+q_5 \geq 0$ and $2q_6+q_5 \geq 0;$
  \item[B2] $2q_3+q_5 < 0$ and $2q_6+q_5 < 0.$
\end{description}
\begin{figure}[!ht]
  $$
 \hskip -0.95in\vcenter { \includegraphics[height= 1.86 in]{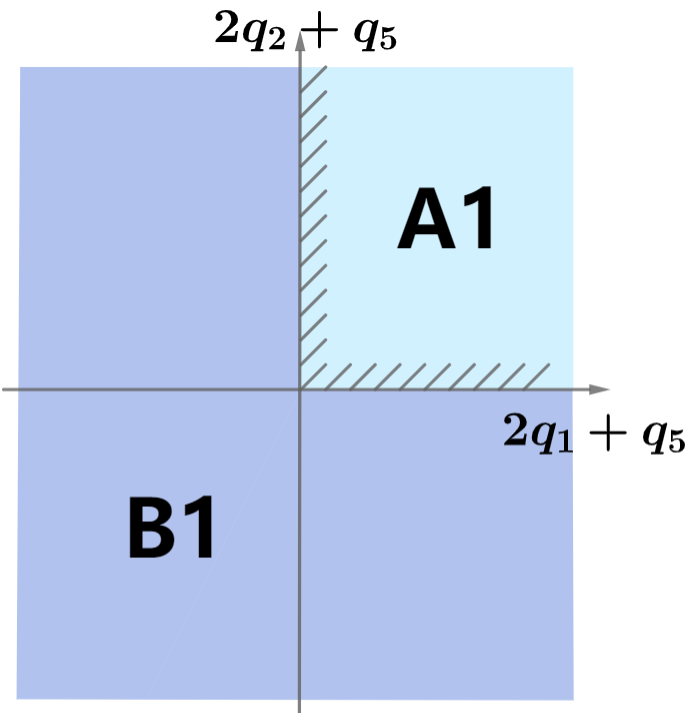}}
 \hskip -4.10in\vcenter { \includegraphics[height= 1.86 in]{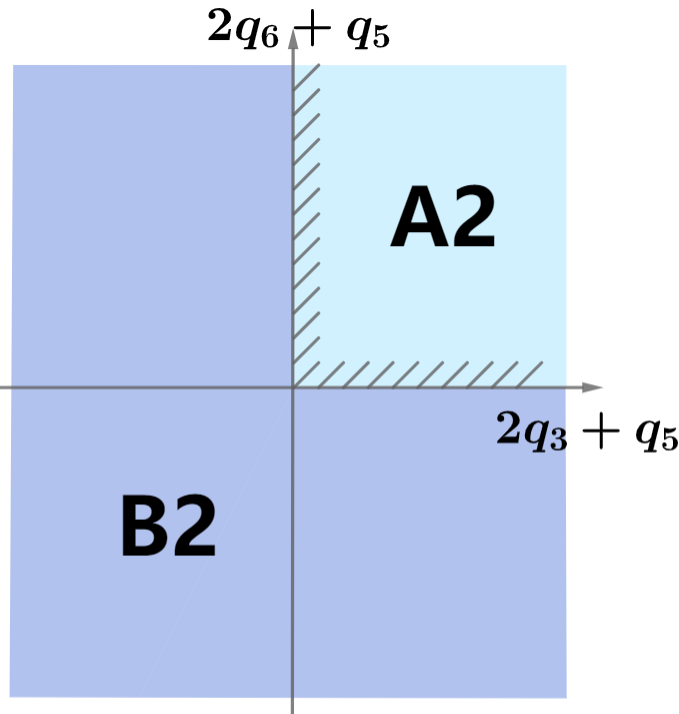}}
$$
\caption{A four-dimensional space: $2q_1+q_5, 2q_2+q_5, 2q_3+q_5, 2q_6+q_5 \in \Z$.
\label{fig:basic1}}
\end{figure}
Now $P3(S_\N)$ is decomposed into four disjoint pieces according to:
\begin{description}
  \item[1] $A1$ and $A2;$
  \item[2] $A1$ and $B2;$
  \item[3] $B1$ and $A2;$
  \item[4] $B1$ and $B2.$
\end{description}
It is easy to check that the last piece is empty by condition C3. For instance,
if $2q_1+q_5<0$ and $2q_3+q_5<0$ then $2(q_1+q_3+q_5)<0$, which is impossible
by condition C3. The second piece $A1$ and $B2$ can be reduced by condition C3
to $A1^{\circ}$ and $B2$ where $A1^{\circ}$ means excluding the equal case in $A1$.
Similarly, the third part $B1$ and $A2$ can be reduced to $B1$ and $A2^{\circ}$
where $A2^{\circ}$ means  excluding the equal case in $A2$.
Then, we only need to consider the following three pieces:
\begin{description}
  \item[1] $A1$ and $A2;$
  \item[2] $A1^{\circ}$ and $B2;$
  \item[3] $B1$ and $A2^{\circ}.$
\end{description}

By slicing $B1$ and $B2$ at the diagonals,
we divide them into five disjoint pieces under the condition P3 according to


\begin{description}
  \item[P31] $2 q_1+q_5<0 $ and $2 q_1+q_5 \leq 2 q_2+q_5$ written as $q_1\leq q_2;$
  \item[P32] $2q_2+q_5<0 $ and $2 q_2+q_5 < 2 q_1+q_5$ written as $q_2 <  q_1;$
  \item[P33] $2q_3+q_5<0 $ and $2 q_3+q_5 < 2 q_6+q_5$ written as $q_3 <  q_6;$
  \item[P34] $2 q_6+q_5 <0 $ and  $2 q_6+q_5 \leq 2 q_3+q_5$ written as $ q_6 \leq q_3;$
  \item[P35] $q_5 <0, q_5 < 2q_4, 2q_1+q_5\geq 0, 2q_2+q_5\geq0, 2q_3+q_5\geq 0$ and $2q_6+q_5\geq 0.$
\end{description}

The specific division method is as follows:
1) The first part $A1$ and $A2$ under the condition P3 becomes the condition P35;
2) Slicing of $B2$ by the diagonal gives P33 and P34. On the other hand,
either P33 or P34 will be easily seen to imply the conditions $A1^{\circ}$ and P3;
3) Slicing of $B1$ by the diagonal gives P31 and P32. Similarly, either P31 or P32 will imply  the conditions $A2^{\circ}$ and P3.

%
%
%
%

Now we have
$$S_\N= P1(S_\N)\biguplus P2(S_\N) \biguplus P31(S_\N) \biguplus P32(S_\N)
\biguplus P33(S_\N) \biguplus P34(S_\N) \biguplus P35(S_\N).$$

Next we show that each piece corresponds to one type in Theorem \ref{t-main-result}.
Observe that the uniqueness for each type follows by the linear independency of the
$\alpha_i$'s. For instance, in the type t2 case, the linear independency of
 $\a_1,\a_2,\a_3,\a_5,\a_6,\a_7$ can be straightforwardly checked.

\begin{enumerate}
  \item[P1:] When condition Pl holds, simply let $ k_i=q_i$ for all $1\le i\le 6$. Then $k_i \in \N$ and this is exactly the type t1 case.

  \item[P2:] When condition P2 holds, we have $q_5- 2q_4 \in \N$ and $-(q_4+1) \in \N$. Substituting $\a_4 = \a_1 + \a_2+ \a_3 + \a_6- 2\a_5- \a_7$ in formula (\ref{alpha-formula}) gives
\begin{align*}
       \a
         =\a_7+(q_1+q_4)\a_1 + &(q_2+q_4) \a_2+(q_3 +q_4) \a_3  \\
              &+(q_5- 2q_4) \a_5 +(q_6 +q_4)\a_6  - (q_4+1) \a_7
     \end{align*}
Comparing with the type t2 case, we shall have
$$\begin{array}{lll}
       k_1=(q_1+q_4), & k_2=(q_2+q_4), & k_3=(q_3+q_4), \\
       k_4=(q_5- 2q_4), & k_5=(q_6 +q_4), & k_6=-(q_4+1).
      \end{array}$$
The condition $k_i \in \N$ can be easily checked.

Conversely, if $\alpha$ is as in the type t2 case, then by Lemma \ref{l-condition+} $\alpha\in S_\N$ so that all conditions in Lemma \ref{l-condition} holds.
Then it is easy to check that P2 holds so that $\alpha\in P2(S_\N)$.
The converse part for the other pieces are similar so we will omit that part of the proof.

            \item[P31:] When condition P31 holds, we have $-(2 q_1+q_5+1) \in \N$ and $q_2-q_1 \in \N$. Substituting $\a_1=\a_3 +\a_4 +\a_6 +\a_7-\a_2-2\a_9  ,\a_5 =\a_3 +\a_6-\a_9$ in formula (\ref{alpha-formula}) gives
              \begin{align*}
               \a=\a_9+(q_2-q_1)\a_2 +&(q_1+q_3+q_5)\a_3+(q_1+q_4)\a_4 \\
                            &+q_1\a_7+(q_1+q_5+q_6)\a_6 -(2q_1+q_5+1)\a_9.
               \end{align*}
   By comparing with the type t31 case, we shall have
       $$\begin{array}{lll}
       k_1=q_2-q_1, & k_2=q_1+q_3+q_5, & k_3=q_1+q_4, \\
       k_4=q_1, & k_5=q_1+q_5+q_6, & k_6=-(2q_1+q_5+1).
      \end{array}$$
Then $k_i \in \N$ by conditions C1, C2 and C3.

            \item[P32:] When condition P32 holds, we have $-(2q_2+q_5+1)\in \N $ and $q_1-q_2-1 \in \N$.
                Substituting $\a_2=\a_3 +\a_4 +\a_6 +\a_7 -\a_1 -2\a_9 ,\a_5=\a_3 +\a_6 -\a_9$ in formula (\ref{alpha-formula}) gives
             \begin{align*}
            \a=(\a_1+\a_9)+(q_1-q_2-1) \a_1 +&(q_2+q_3+q_5)\a_3  +(q_2+q_4) \a_4 \\
            &+q_2 \a_7+(q_2+q_5+q_6)\a_6 - (2q_2+q_5+1)\a_9
               \end{align*}
    By comparing with the type t32 case, we shall have
             $$\begin{array}{lll}
                   k_1=q_1-q_2-1, & k_2=q_2+q_3+q_5, & k_3=q_2+q_4, \\
                    k_4=q_2, & k_5=q_2+q_5+q_6, & k_6=- (2q_2+q_5+1)
              \end{array}$$

     Then $k_i \in \N$ by conditions C1, C2 and C3.

            \item[P33:] When $2q_3+q_5<0 $ and $q_3 <  q_6 $, we have $-(2q_3+q_5+1) \in \N $ and $q_6 -q_3-1 \in \N $. Substituting
            $\a_3=\a_1 +\a_2 +\a_4 +\a_7 -\a_6 -2\a_8$ and $\a_5 =\a_1 +\a_2 -\a_8$ in formula (\ref{alpha-formula}) gives
             \begin{align*}
              \a=(\a_6+\a_8)+ (q_1+q_3+q_5) \a_1 & +(q_2+q_3+q_5)\a_2+(q_3+q_4) \a_4 \\
              & +(q_6-q_3-1)\a_6+q_3 \a_7 -(2q_3+q_5+1)\a_8
               \end{align*}
       By comparing with the type t33 case, we shall have
             $$\begin{array}{lll}
                   k_1=q_1+q_3+q_5, & k_2=q_2+q_3+q_5, & k_3=q_3+q_4, \\
                    k_4=q_3, & k_5=q_6-q_3-1, & k_6= -(2q_3+q_5+1).
              \end{array}$$

   Then $k_i \in \N$ by conditions C1, C2 and C3.

             \item[P34:]  When condition P34 holds, we have $-(2 q_6+q_5+1) \in \N$ and $q_3-q_6 \in \N$. Substituting
              $\a_6=\a_1+\a_2 +\a_4 +\a_7 - \a_3 -2\a_8$ and $\a_5=\a_1 +\a_2 -\a_ 8$ in formula (\ref{alpha-formula}) gives
            \begin{align*}
            \a=\a_8+(q_1+q_5+q_6)\a_1 +&(q_2+q_5+q_6)\a_2 +(q_3-q_6) \a_3 \\
                     &  +(q_4+q_6)\a_4 +q_6 \a_7 -(2q_6+q_5+1)\a_8
            \end{align*}
   By comparing with the type t34 case, we shall have
            $$\begin{array}{lll}
             k_1=q_1+q_5+q_6, & k_2=q_2+q_5+q_6, & k_3=q_3-q_6, \\
             k_4=q_4+q_6, & k_5=q_6, & k_6= -(2q_6+q_5+1).
            \end{array}$$

 Then $k_i \in \N$ by conditions C1, C2 and C3.


            \item[P35:] When condition $P35$ holds,
              we can substitute  $\a_5= \frac{1}{2} (\a_1+\a_2+\a_3+\a_6-\a_4-\a_7)$  in formula (\ref{alpha-formula}) and get
              \begin{align*}
                \a=\left ( q_1+ \frac{1}{2} q_5 \right)\a_1 &+\left(q_2+\frac{1}{2} q_5 \right)\a_2 +\left(q_3+\frac{1}{2} q_5\right)\a_3  \\
                & +\left(q_4 -\frac{1}{2} q_5\right)\a_4- \frac{1}{2} q_5\a_7 +\left(q_6 +\frac{1}{2} q_5\right)\a_6.
              \end{align*}

              According to the parity of $q_5$, we divide it into two cases:

                    Case 351: When $q_5$ is even,  we rewrite $\a$ as
                    \begin{align*}
                      \a =(\a_7+\a_4)+&\left( q_1+ \frac{1}{2} q_5\right)\a_1 +\left(q_2+\frac{1}{2} q_5\right)\a_2 +\left(q_3+\frac{1}{2} q_5\right)\a_3 \\
                      & +\left(q_4 -\frac{1}{2} q_5-1\right)\a_4 - \left(\frac{1}{2} q_5+1\right)\a_7 +\left(q_6 +\frac{1}{2} q_5\right)\a_6.
                    \end{align*}
            By comparing with the type t351 case, we shall have
                     $$\begin{array}{lll}
                   k_1= q_1+ \frac{1}{2} q_5, & k_2=q_2+\frac{1}{2} q_5, & k_3=q_3+\frac{1}{2} q_5, \\
                    k_4=q_4 -\frac{1}{2} q_5-1, & k_5=- (\frac{1}{2} q_5+1), & k_6=q_6 +\frac{1}{2} q_5.
              \end{array}$$
By condition P35 and the assumption $q_5$ is even, we see that
$k_i \in \N$ for all $i$.

                    Case 352: When $q_5$ is odd, we rewrite $\a$ as
                   \begin{align*}
                     \a= \frac{1}{2}\left(\a_1+\a_2+\a_3+\a_4+\a_6 +\a_7\right) &+\left( q_1+ \frac{1}{2} q_5-\frac{1}{2}\right)\a_1 \\
                         +\left(q_2+\frac{1}{2} q_5-\frac{1}{2}\right)\a_2 +&\left(q_3+\frac{1}{2} q_5-\frac{1}{2}\right)\a_3+\left(q_4 -\frac{1}{2} q_5-\frac{1}{2}\right)\a_4 \\
                        &  -\left(\frac{1}{2} q_5+\frac{1}{2}\right)\a_7 +\left(q_6 +\frac{1}{2} q_5-\frac{1}{2}\right)\a_6.
                   \end{align*}
               By comparing with the type t352 case, we shall have
               $$T=\frac{1}{2}(\a_1+\a_2+\a_3+\a_4+\a_6+\a_7),$$
               $$\begin{array}{lll}
                   k_1= q_1+ \frac{1}{2} q_5-\frac{1}{2}, & k_2=q_2+\frac{1}{2} q_5-\frac{1}{2}, & k_3=q_3+\frac{1}{2} q_5-\frac{1}{2}, \\
                    k_4=q_4 -\frac{1}{2} q_5-\frac{1}{2}, & k_5=- (\frac{1}{2} q_5+\frac{1}{2}), & k_6=q_6 +\frac{1}{2} q_5-\frac{1}{2}.
              \end{array}$$
By condition P35 and the assumption $q_5$ is odd, we see that
$k_i \in \N$ for all $i$.
             \end{enumerate}
\end{proof}

\def\diag{\mathop{\mathrm{diag}}}
\def\Oeq{{\mathop{\mathrm{\Omega}}\limits_=}}
\def\Oge{{\mathop{\mathrm{\Omega}}\limits_\geq}}
\section{Enumeration for Magic distinct Labellings of the Cube\label{sec-Magic-distinct}}
Let $S^*_\N$ be the set of magic labellings of the cube.
Then $S_\N^*$ is obtained from $S_\N$ by slicing out $\binom{12}{2}$ hyper planes.
We are interested in the generating function
$$G^*(\x,y)= \sum_{\a \in S^*_\N}   \x^\a y^{r(\a)}.$$
It turns out to be so complicated that we only report the enumeration result. That is,
we will give the generating function for the number $h_r$ of magic distinct labellings of the cube with magic sum $r$:
$$G^*(y)=G^*(1,\dots, 1,y)=\sum_{r\ge 0} h_r y^r.  $$

\subsection{Preliminaries}
MacMahon's Omega operators on formal series are defined by
\begin{align*}
  \Oeq \sum_{k=-\infty}^\infty c_i \lambda^i &= c_0,\qquad
  \Oge \sum_{k=-\infty}^\infty c_i \lambda^i =\sum_{k=0}^\infty c_i.
\end{align*}
They are basic ingredients of MacMahon's
partition analysis, which has been restudied by Andrews and his coauthors in a series of papers starting with \cite{Andrews-Omege-1}.

MacMahon's idea is to use new variables to replace linear constraints so that many problems can be converted into the constant term
of a special kind of rational functions.
It is standard to express $G(\x,y)$ by the following constant term in $\lambda_1,\dots, \lambda_8$.
\begin{align*}
G(\x ,y)=&\Oeq \frac{1} {\left({1-\lambda_{{1}}\lambda_{{2}}x_{{1}}}\right)\left({1-\lambda_{{3}}\lambda_{{4}}x_{{4}}}\right)
\left({1-\lambda_{{2}}\lambda_{{3}}x_{{3}}}\right)\left({1-\lambda_{{1}}\lambda_{{4}}x_{{2}}}\right)}
\\
 &\times \frac{1} {\left({1-\lambda_{{5}}\lambda_{{6}}x_{{5}}}\right)\left({1-\lambda_{{7}}\lambda_{{8}}x_{{8}}}\right)
 \left({1-\lambda_{{6}}\lambda_{{7}}x_{{7}}}\right)\left({1-\lambda_{{5}}\lambda_{{8}}x_{{6}}}\right)}  \\
   &\times \frac{1} {\left({1-\lambda_{{1}}\lambda_{{5}}x_{{9}}}\right)\left({1-\lambda_{{2}}\lambda_{{6}}x_{{10}}}\right)
   \left({1-\lambda_{{3}}\lambda_{{7}}x_{{11}}}\right)\left({1-\lambda_{{4}}\lambda_{{8}}x_{{12}}}\right)
   ( 1
-\frac{y} {\lambda_{{1}} \lambda_{{2}} \lambda_{{3}} \lambda_{{4}} \lambda_{{5}} \lambda_{{6}} \lambda_{{7}} \lambda_{{8}} }
)}.
\end{align*}
The $\lambda$ variables are usually clear from the context and the whole theory relies on unique series expansion of rational functions.
There are algorithm for evaluating this type of constant terms. Such as the Mathematica package \texttt{Omega} in \cite{Andrews-Omege}.
We use the first named author's Maple packages \texttt{Ell} in \cite{Xin-fast-alg} and (often better) \texttt{CTEuclid} in \cite{Xin-euclid-alg}. See \cite{Xin-fast-alg} for a rigorous description about how the above works in general situation,
i.e., in a field of iterated Laurent series.

Let us describe briefly how we discovered the formula \eqref{G-generatingfunction} in Corollary \ref{c-corollary}.
By using \texttt{CTEuclid}, we can obtain an expression of $G(\x,y)$ as a sum of rational functions quickly.
The normal form of $G(\x,y)$ is given by
\begin{align*}
  G(\x ,y) &=\frac{N(\x,y)}{(1-\x^{\a_1} y)(1-\x^{\a_2}y)\cdots (1-\x^{\a_9}y)}, \quad \text{ where}\\
    N(\x,y)&=  1-{y}^{2}x_{{2}}x_{{3}}x_{{6}}x_{{7}}x_{{9}}x_{{10}}x_{{11}}x_{{12}}-{y}^{2}x_{{1}}x_{{4}}x_{{5}}
   x_{{8}}x_{{9}}x_{{10}}x_{{11}}x_{{12}} \\
   &-{y}^{2}x_{{1}}x_{{2}}x_{{3}}x_{{4}}x_{{5}}x_{{6}}x_{{7}}x_{{8}}
   +{y}^{4}x_{{1}}x_{{2}}x_{{3}}x_{{4}}x_{{5}}x_{{6}}x_{{7}}x_{{8}}{x_{{9}}}^{2}{x_{{10}}}^{2}{x_{{11}}}^{2}{x_{{12}}}^{2} \\
   &+{y}^{4}{x_{{1}}}^{2}x_{{2}}x_{{3}}{x_{{4}}}^{2}{x_{{5}}}^{2}x_{{6}}x_{{7}}{x_{{8}}}^{2}x_{{9}}x_{{10}}x_{{11}}x_{{12}}
    +{y}^{4}x_{{1}}{x_{{2}}}^{2}{x_{{3}}}^{2}x_{{4}}x_{{5}}{x_{{6}}}^{2}{x_{{7}}}^{2}x_{{8}}x_{{9}}x_{{10}}x_{{11}}x_{{12}} \\
   &-{y}^{6}{x_{{1}}}^{2}{x_{{2}}}^{2}{x_{{3}}}^{2}{x_{{4}}}^{2}{x_{{5}}}^{2}{x_{{6}}}^{2}{x_{{7}}}^{2}{x_{{8}}}^{2}{x_{{9}}}^{2}{x_{{10}}}^{2}
  {x_{{11}}}^{2}{x_{{12}}}^{2}.
\end{align*}
Such a formula is usually too complex to use.
Formula \eqref{G-generatingfunction} is desirable and is easy to be verified, but it is unclear how to obtain such a decomposition.
We guessed such a formula by using certain criterion.

By setting $x_i=1$ for all $i$, we obtain
$G(y)=G(\mathbf{1},y)= {\frac {{y}^{3}+3\,{y}^{2}+3\,y+1}{ \left( 1-y \right) ^{6}}}.$
This agree with \cite[p. 73]{Ahmed-PhD-thesis}.

When dealing with magic distinct labellings, we also use
the following two operators:
$$ \diag_{x,y} \sum_{i,j\ge 0} a_{i,j} x^i y^j= \sum_{i\ge 0}a_{i,i} x^i y^i, \quad \diag_{x>y}\sum_{i,j\ge 0} a_{i,j} x^i y^j =    \sum_{i>j \ge 0}a_{i,j} x^i y^j. $$
They can be realized by the Omega operators. See \cite{Zhong} for details. Clearly, they can be naturally extended to
$\diag_{x_1>x_2>\cdots>x_k}$, which is treated as the identity operator $1$ when $k\le 1$.

Let $A$ and $B$ be two disjoint sets. For permutations $\pi \in S_A$ and  $\sigma \in S_B$, a permutation $\tau\in S_{A\cup B}$
is said to be a shuffle of $\pi$ and $\sigma$ if $\tau|A=\pi$ and $\tau|B=\sigma$, where $\tau|C$, called the restriction
of $\tau$ in $C$, is obtained from
$\tau$ by omitting elements not in $C$. Denote by $Sh(\pi,\sigma)$ the set of shuffle permutations of $(\pi,\sigma)$.
For instance, if $\pi=253$ and $\sigma=41$, then $Sh(\pi,\sigma)=\{25341,25431,25413, 24531,24513,24153,42531, 42513,42153,41253\}$.

For $\pi=\pi_1\pi_2\cdots \pi_a\in S_A$ and $\sigma=\sigma_1\sigma_2\cdots \sigma_b\in S_B$, define the linear operator
$$\Pi_{\pi,\sigma} F(\x,y) = \diag_{x_{\pi_1}>x_{\pi_2}> \cdots >x_{\pi_a}} \cdot  \diag_{x_{\sigma_1}>x_{\sigma_2}> \cdots >x_{\sigma_b}} F(\x,y)$$
acting on a formal power series $F(\x,y)$. We allow $\sigma$ to be empty.

\subsection{Direct computation by divide and conquer}
Now we describe how to extract $G^*(y)$ from $G(\x,y)$. Because
it is important to keep the number of factors in the denominator of $F$ small when extracting constant term of $F$,
we prefer using $G_i, \ i=1,\dots, 8$ rather than $G$.

We can compute $G^*(\x,y)$ by the formula
$$G^*(\x,y)= \sum_{\pi\in S_{12}} \Pi_{\pi} G(\x,y) = \sum_{i=1}^8 \sum_{\pi\in \mathfrak{S}_{12}} \Pi_{\pi} G_i(\x,y).$$
But then we need to compute $8\times 12! \approx 3.83\times 10^9$ cases, which is out of question.

Thus we device to compute $G^*(y)$ by divide and conquer.

Firstly we partition the index set $\{1,2,\dots, 12\}$ of the edges into disjoint union of
$A=\{1,4,6,7,9,10,11,12\}$ and $B=\{2,3,5,8\}$.
Note that edges with labels in $B$ form an extreme ray, and edges with labels in $A$ form a Hamiltonian cycle.
Other choices of $A$ and $B$ might work, but that take time to test and we find a better way by using symmetry.

Next we compute $\Pi_{\pi,\sigma} G_i(\x,y)$ for each pair $(\pi,\sigma)\in S_A\times S_B$ and keep only those non-varnishing cases.
It turns out that for $i=1,2,5,6$, $\Pi_{\pi,\sigma} G_i(\x,y)\neq 0,$ only for 160 out of $8!\times 4!=967680$ pairs $(\pi,\sigma)$.
The other cases are also reasonable.

Finally for each non-varnishing case, we compute $\Pi_\tau \Pi_{\pi,\sigma} G_i(\x,y)$ for all $\binom{12}{4}=495$ permutations $\tau\in Sh(\pi,\sigma)$,
setting $x_j=1$ for all $j$,
and take their sum. The final result is $G^*(y)$.
In this way we can compute $G^*(y)$ in 240 minutes.

The decomposition on the permutations is:
$$ S_{12} = \biguplus_{(\pi,\sigma)\in S_{A}\times S_{B}} Sh(\pi,\sigma).$$
This leads to the following formula:
$$G^*(\x,y)= \sum_{\pi\in S_{A}} \sum_{\sigma\in S_B} \sum_{\tau\in Sh(\pi,\sigma)}
\Pi_\tau G(\x,y).$$
Now for each $(\pi,\sigma)$, we can write, for all $\tau\in Sh(\pi,\sigma)$,
$$ \Pi_\tau G(\x,y) = \Pi_\tau   ( \Pi_{\pi,\sigma} G(\x,y)) = \sum_{i=1}^8  \Pi_\tau   ( \Pi_{\pi,\sigma} G_i(\x,y)). $$
Then for each $i$, we can discard those $(\pi,\sigma)$ with $\Pi_{\pi,\sigma} G_i(\x,y)=0$.

\medskip
We remark that in practice, we check the nullity of $\Pi_{\pi,\sigma} G_i(\x,y)\big|_{x_j=1}^{j=1,\dots, 12}$ in stead. The reason
is that $G_i(\x,y)$ has only nonnegative coefficients, and so does $\Pi_{\pi,\sigma} G_i(\x,y)$. This is crucial because
it is not easy to check if a multivariate rational function (written as a sum) is $0$.

\subsection{Simplification using group action}
Denote by $U$ the automorphism group of the cube. It is well-known that $U$ has cardinality $3!\times 2^3=48$. (The symmetry group
of the geometric cube (in $\R^3$) has cardinality $24$.) The group $U$ induces an action on the edges, say $x_1,\dots, x_{12}$.

For any magic labelling of the cube $\alpha=(x_1,x_2,\dots, x_{12})\in S_\N$, we may assume $x_1$ to be the smallest under the action of $U$.
Since the all $1$ labelling is in $S_\N$, we may further assume $x_1=0$ by subtracting $(x_1,x_1,\dots, x_1)$.

Let $U_1$ be the subgroup of $U$ that fixes $x_1$. Then $U_1$ consists of $4$ elements generated by: i) (2,3)(9,10)(6,7)(11,12), corresponding to
flipping along the center of edges $x_1$ and $x_8$ in $\R^3$; ii) (3,10)(4,5)(7,11)(6,12)(2,9), which cannot be realized in $\R^3$. When $U_1$ acting on the edges,
we get  5 orbits: $\{x_1\}, \{x_8\}, \{x_4,x_5\}, \{x_2,x_3,x_9,x_{10}\}, \{x_6,x_7,x_{11},x_{12}\}$. The next lemma says that
the second smallest label can only appear in two of the orbits.

\begin{lem} \label{orbit-1}
Suppose $\alpha=(x_1,\dots, x_{12})\in S_\N$ with
$x_1 <x_i $ and  $x_i = \min \{\ x_2,  \dots,  x_{12}\}$. Then $i \notin \{2,3,9,10\} $ and $ i \neq 8$.
\end{lem}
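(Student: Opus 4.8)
\section*{Proof proposal}

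The plan is to reduce both assertions to a handful of linear relations among the $x_j$ obtained by subtracting pairs of vertex equations in the system \eqref{e-equation}, and then to finish by positivity. First I would normalize. Since $x_1$ is the strict minimum, subtracting the constant labelling $(x_1,\dots,x_1)$ from $\alpha$ gives another element of $S_\N$: every entry $x_j-x_1$ stays nonnegative, and each vertex sum drops to $r-3x_1\ge 0$. This shift preserves the relative order of the labels, so I may assume $x_1=0$; then every remaining label is strictly positive and $x_i=\min\{x_2,\dots,x_{12}\}>0$.

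Next I would extract the key relations. Putting $x_1=0$ into the two equations containing $x_1$ gives $x_2+x_9=r$ and $x_3+x_{10}=r$. Subtracting $x_5+x_6+x_9=r$ from $x_2+x_9=r$ yields $x_2=x_5+x_6$; symmetrically $x_3=x_5+x_7$, and subtracting the equations containing $x_4$ gives $x_9=x_4+x_{12}$ and $x_{10}=x_4+x_{11}$. Finally, rewriting $x_5+x_6+x_9=r$ as $x_4+x_5+x_6+x_{12}=r$ (via $x_9=x_4+x_{12}$) and subtracting $x_6+x_8+x_{12}=r$ produces the crucial identity $x_4+x_5=x_8$.

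Both claims then fall out of positivity. For $i\in\{2,3,9,10\}$ the corresponding label is written above as a sum of two labels lying in $\{x_2,\dots,x_{12}\}$ (for instance $x_2=x_5+x_6$, $x_9=x_4+x_{12}$); if $x_i$ were the minimum of that set, each summand would be $\ge x_i$, forcing $x_i\ge 2x_i$, i.e.\ $x_i\le 0$, contradicting $x_i>0$. (Equivalently, since $\{2,3,9,10\}$ is a single $U_1$-orbit, it suffices to rule out $i=2$ and invoke symmetry.) For $i=8$ the identity $x_4+x_5=x_8$ together with $x_4,x_5\ge x_8$ gives $x_8\ge 2x_8$, again impossible since $x_8>0$. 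This settles $i\notin\{2,3,9,10\}$ and $i\neq 8$.

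I expect no serious obstacle: the entire argument rests on spotting the right pairs of vertex equations to subtract so that edge $8$ and the four edges adjacent to edge $1$ each get expressed as a sum of two positive labels. The only mildly delicate point is the normalization step, where one must check that the shifted labelling is still a genuine magic labelling (nonnegativity of entries and of the new magic sum), which follows directly from $x_1$ being the minimum. By contrast, the labels in the orbits $\{x_4,x_5\}$ and $\{x_6,x_7,x_{11},x_{12}\}$ admit no such ``sum of two other labels'' relation, which is precisely why those are the only orbits in which the second-smallest label can sit.
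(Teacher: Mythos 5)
Your proof is correct and is essentially the paper's argument: both subtract pairs of vertex equations to obtain the identities $x_2+x_1=x_5+x_6$ (and its analogues for $x_3,x_9,x_{10}$) and $x_8+x_1=x_4+x_5$, and then contradict the minimality of $x_i$. The only cosmetic differences are your preliminary normalization to $x_1=0$ and your direct verification of all four indices in $\{2,3,9,10\}$, where the paper instead invokes the $U_1$-orbit structure to reduce to the single case $i=2$.
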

 \begin{proof}
 Under the action of $U_1$, we only need to check the case $i=2$ and $i=8$.

If $x_2$ is the second smallest, then combing $x_1+x_2+x_9=r$  and $x_5+x_6+x_9=r$ gives $ x_2+x_1 =x_5 + x_6$.
This is a contradiction.

If $x_8$ is the second smallest, by using
\begin{equation*}
 x_5+x_6+x_9= r ,  x_6+x_8+x_{12}=r \quad \text{ and }\quad  x_1+x_2+x_9=r,  x_2+x_4+x_{12}=r,
\end{equation*}
 we  obtain respectively: $x_5+x_9 =x_8+x_{12} $ and $ x_1+x_9 =x_4+x_{12}$.
 Combining them gives $x_8+x_1  =x_4 +x_5$. This is again a contradiction.
 \end{proof}

The next result tells how to choose the representative for $\alpha\in S^*_\N$ under the action of $U$.
\begin{prop}
For any magic distinct labelling of the cube $\alpha\in S^*_\N$, there is a unique $u\in U$
 such that $\alpha$ is transformed by $u$ into $(x_1,\dots, x_{12})$ belongs to one of the following 2 types.

i) $x_1$ is the smallest, $x_6$ is the second smallest.

ii) $x_1$ is the smallest, $x_4$ is the second smallest, and $x_6<x_7$.
\end{prop}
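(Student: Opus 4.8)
The goal is to show that the action of $U$ on $S^*_\N$ has exactly the prescribed set of representatives, and that each $\alpha$ has a \emph{unique} such representative. My strategy is to reduce the problem in stages, peeling off one symmetry at a time, and to use Lemma \ref{orbit-1} to constrain where the second smallest label can sit. First I would normalize: given $\alpha\in S^*_\N$, the labels are distinct, so there is a unique smallest one; using the transitivity of $U$ on edges I can move it into position $x_1$. (Since the all-$1$ labelling lies in $S_\N$ we may subtract a constant as in the text, but for the distinct case the ordering is what matters, not the value.) This fixes a coset of the stabilizer $U_1$, and the remaining freedom is exactly the $4$ elements of $U_1$ described before the lemma.

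\textbf{Locating the second smallest.}
With $x_1$ smallest, I would apply Lemma \ref{orbit-1}, which says the second smallest label $x_i$ satisfies $i\notin\{2,3,9,10\}$ and $i\neq 8$. Recall the five orbits of $U_1$ on the edges are $\{x_1\},\{x_8\},\{x_4,x_5\},\{x_2,x_3,x_9,x_{10}\},\{x_6,x_7,x_{11},x_{12}\}$. The lemma eliminates the orbit $\{x_8\}$ and the orbit $\{x_2,x_3,x_9,x_{10}\}$ entirely, so the second smallest must lie in either $\{x_4,x_5\}$ or $\{x_6,x_7,x_{11},x_{12}\}$. Now I use the two generators of $U_1$ to move it to a canonical spot within its orbit: the generator $(2,3)(9,10)(6,7)(11,12)$ together with $(3,10)(4,5)(7,11)(6,12)(2,9)$ acts on $\{x_6,x_7,x_{11},x_{12}\}$, and one checks that under $U_1$ the whole orbit $\{x_6,x_7,x_{11},x_{12}\}$ can be brought to position $x_6$, giving case (i); likewise the orbit $\{x_4,x_5\}$ can be brought to position $x_4$, giving the start of case (ii). So every $\alpha$ has a representative of the stated form.

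\textbf{Pinning down the last ambiguity and uniqueness.}
The remaining issue is that moving the second smallest into its canonical position may not use up all of $U_1$: some nontrivial stabilizer may survive. In case (ii), once $x_1$ is smallest and $x_4$ is second smallest, I would compute the subgroup of $U_1$ fixing both positions; I expect it to be the order-$2$ element swapping $x_6\leftrightarrow x_7$ (among others), and the condition $x_6<x_7$ is precisely the tie-breaker that kills this residual symmetry, yielding a unique representative. In case (i), once $x_1$ is smallest and $x_6$ is second smallest, I would similarly verify that the stabilizer of these two positions inside $U_1$ is trivial, so no extra condition is needed. Uniqueness of $u\in U$ then follows because the full normalization (smallest to $x_1$, second smallest to its canonical orbit-representative, tie-breaker $x_6<x_7$ when applicable) determines the coset of the stabilizer completely, and distinctness of all labels guarantees the orderings are strict so no boundary ambiguity arises.

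\textbf{The main obstacle.}
The delicate part is the uniqueness claim, specifically the careful bookkeeping of residual stabilizers after each normalization step. I must verify that in case (i) the pair of positions $(x_1,x_6)$ has trivial stabilizer in $U_1$, whereas in case (ii) the pair $(x_1,x_4)$ has a nontrivial order-$2$ stabilizer that is exactly neutralized by $x_6<x_7$; getting these subgroup computations right, and confirming that cases (i) and (ii) are genuinely disjoint (a single $\alpha$ cannot be normalized into both forms), is where the real work lies. This amounts to a finite check over the $48$ group elements, but it must be done with care so that the two types partition $S^*_\N$ rather than merely cover it.
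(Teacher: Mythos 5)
Your proposal is correct and follows essentially the same route as the paper: normalize the smallest label to $x_1$, invoke Lemma \ref{orbit-1} to restrict the second smallest to the orbits $\{x_4,x_5\}$ and $\{x_6,x_7,x_{11},x_{12}\}$, and then use orbit--stabilizer bookkeeping in $U_1$ (trivial stabilizer for $x_6$, order-$2$ stabilizer $(2,3)(9,10)(6,7)(11,12)$ for $x_4$, neutralized by $x_6<x_7$). The subgroup computations you flag as "to be verified" are exactly what the paper records, and disjointness of the two cases is automatic since the second-smallest edge lies in a single well-defined $U_1$-orbit.
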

\begin{proof}
We have assumed that $x_1$ is the smallest. Now by fixing $x_1$ we act by the group $U_1$.
By Lemma \ref{orbit-1}, the second smallest label can only belongs to two orbits, thus we can transform by element in $U_1$ so that
either $x_6$ or $x_4$ is the second smallest.

Case 1: $x_6$ is the second smallest. Since the orbit of $x_6$ is of size $4$, the  subgroup $U_{1,6}$ of $U_1$ fixes $x_6$ only contains the
identity element.

Case 2: $x_4$ is the second smallest. Since of orbit of $x_4$ is $2$, the subgroup $U_{1,4}$ of $U_1$ fixes $x_4$ contains two element:
the
identity element and $(2,3)(9,10)(6,7)(11,12)$. In this case, add the condition $x_6 < x_7 $ to make the transformation unique.
\end{proof}

Thus we can compute $G^*(\x,y)$ according to the above two cases.
For the first case, we let

$$F_1(\x,y)=\left(\diag_{x_6>x_1}\diag_{x_2>x_6}\cdots \diag_{x_5>x_6}\diag_{x_7>x_6}\cdots  \diag_{x_{12}>x_6} G(\x,y)\right) \bigg|_{x_1=0} .$$

 By using \texttt{CTEuclid}, we get
\begin{align*}
F_1(\x ,y)&=\frac 1 {\left({1-yx_{{4}}x_{{8}}x_{{9}}x_{{10}}}\right)\left({1-yx_{{3}}x_{{7}}x_{{9}}x_{{12}}}\right)}\\
&\times
\frac{y ^{8}x_{{10}} ^{4}x_{{11}} ^{2}x_{{12}} ^{3}x_{{5}} ^{2}x_{{6}} x_{{7}} ^{2}x_{{8}} ^{4}x_{{9}} ^{5}x_{{2}} ^{3}x_{{3}} ^{4}x_{{4}} ^{2}}
{\left({1-yx_{{9}}x_{{10}}x_{{11}}x_{{12}}}\right)
\left({1-yx_{{2}}x_{{3}}x_{{5}}x_{{8}}}\right) \left({1-{y}^{4}{x_{{2}}}^{2}{x_{{3}}}^{2}x_{{4}}x_{{5}}x_{{6}}x_{{7}}
 {x_{{8}}}^{2}{x_{{9}}}^{2}{x_{{10}}}^{2}x_{{11}}x_{{12}}}\right)}.
\end{align*}

The computation of the second case is similar.
 By using \texttt{CTEuclid}, we get
\begin{align*}
F_2(\x,y)&= \frac 1 {\left({1-yx_{{6}}x_{{2}}x_{{7}}x_{{3}}}\right)\left({1-yx_{{3}}x_{{7}}x_{{9}}x_{{12}}}\right)}\\
&\times
\frac{y ^{8}x_{{12}} ^{3}x_{{11}} ^{2}x_{{10}} ^{3}x_{{9}} ^{4}x_{{8}} ^{3}x_{{7}} ^{3}x_{{6}} ^{2}x_{{5}} ^{2}x_{{4}} x_{{2}} ^{4}x_{{3}} ^{5}} {\left({1-yx_{{9}}x_{{10}}x_{{11}}x_{{12}}}\right)
\left({1-yx_{{2}}x_{{3}}x_{{5}}x_{{8}}}\right)
  \left({1-{y}^{4}{x_{{2}}}^{2}{x_{{3}}}^{2}x_{{4}}x_{{5}}
    x_{{6}}x_{{7}}{x_{{8}}}^{2}{x_{{9}}}^{2}{x_{{10}}}^{2}x_{{11}}x_{{12}}}\right)}.
\end{align*}

Use the method of the previous subsection for $F_1(\x,y)$ and $F_2(\x,y)$ respectively, we can easily obtain $F^*_1(y) $ and $F^*_2(y)$.
Finally, by using the fact $G^*(y)= \frac{F^*_1(y)+F^*_2(y)}{1-y^3}$, we obtain
\begin{align*}
G^*(y)=&\frac{y ^{17}\left({1-y}\right)^{4}\left({1-{y}^{2}}\right)^{2} N}
{\left({1-{y}^{3}}\right)\left({1-{y}^{16}}\right)
\prod_{i=5}^{14}(1-y^i)}\\
=&6\,{y}^{17}+13\,{y}^{18}+34\,{y}^{19}+60\,{y}^{20}+128\,{y}^{21}+199\,
{y}^{22}+331\,{y}^{23}+\cdots ,
\end{align*}
where
{\tiny
\begin{align*}
  N=& 6+37\,y+158\,{y}^{2}+514\,{y}^{3}+1451\,{y}^{4}+3621\,{y}^{5}+8284\,{y}^{6}+17569\,{y}^{7}+35070\,{y}^{8}+66312\,{y}^{9}+119823\,{y}^{10}+207728\,{y}^{11} \\
  +&347391\,{y}^{12}+561997\,{y}^{13}+882742\,{y}^{14}+1348938\,{y}^{15}+2010779\,{y}^{16}+2928154\,{y}^{17}+4173803\,{y}^{18}+5830039\,{y}^{19} \\
  +&7992068\,{y}^{20}+10761439\,{y}^{21}+14249943\,{y}^{22}+18568515\,{y}^{23}+23832260\,{y}^{24}+30144300\,{y}^{25}+37602936\,{y}^{26}\\
   +&46279969\,{y}^{27}+56232269\,{y}^{28}+67474300\,{y}^{29}+79996493\,{y}^{30}+93732893\,{y}^{31}+108588539\,{y}^{32}+124402462\,{y}^{33}\\
   +&140987325\,{y}^{34}+158087400\,{y}^{35}+175431907\,{y}^{36}+192688207\,{y}^{37}+209528800\,{y}^{38}+225578607\,{y}^{39}+240495470\,{y}^{40}\\
   +&253908861\,{y}^{41}+265511859\,{y}^{42}+274990182\,{y}^{43}+282121684\,{y}^{44}+286693656\,{y}^{45}+288607900\,{y}^{46}+287784490\,{y}^{47}\\
   +&284268896\,{y}^{48}+278123786\,{y}^{49}+269535199\,{y}^{50}+258694438\,{y}^{51}+245902795\,{y}^{52}+231445735\,{y}^{53}+215696263\,{y}^{54}\\
   +&198987144\,{y}^{55}+181711391\,{y}^{56}+164197341\,{y}^{57}+146808026\,{y}^{58}+129822649\,{y}^{59}+113534001\,{y}^{60}+98140965\,{y}^{61}\\
   +&83842462{y}^{62}+70743178{y}^{63}+58943269{y}^{64}+48456737{y}^{65}+39295487{y}^{66}+31400920{y}^{67}+24718505{y}^{68}\\+&19141614{y}^{69}
   +14576321\,{y}^{70}+10894761\,{y}^{71}+7988749\,{y}^{72}+5731894\,{y}^{73}+4021600\,{y}^{74}+2748644\,{y}^{75}+1828595\,{y}^{76}\\+&1177014\,{y}^{77}
   +732167\,{y}^{78}+435841\,{y}^{79}+247875\,{y}^{80}+132180\,{y}^{81}+65960\,{y}^{82}\\
   +&29549\,{y}^{83}+11862\,{y}^{84}+3740\,{y}^{85}+897\,{y}^{86}
.\end{align*}
}
By using the above basic ideas, we can reproduce $G^*(y)$ in 3 minutes.
%
%
%
%
%
%

%


\section{Summary\label{s-summary}}

In this paper, we give a complete construction of all magic labellings of the cube.
They are classified by eight types. We used them to enumerate magic distinct labellings of the cube.
The computation is significantly simplified when the group of the symmetry encountered.
The method will apply to much larger graphs with good group of symmetry.


\begin{thebibliography}{99}
\bibitem{Ahmed-PhD-thesis}
M. Ahmed, \newblock Algebraic Combinatorics of Magic Squares, Ph.D. thesis, University of California, Davis, 2004, arXiv:math.CO/0405476.


\bibitem{Andrews-Omege-1}
G. E. Andrews, MacMahon¡¯s partition analysis. I. The lecture hall partition theorem, Mathematical essays in honor of Gian-Carlo Rota (Cambridge, MA, 1996), Progr. Math., vol.161, Birkh\"{a}user Boston, Boston, MA, 1998, pp. 1--22.


\bibitem{Andrews-Omege}
G. E. Andrews, P. Paule, and A. Riese, MacMahons partition analysis: the Omega package, European J. Combin. 22(2001).



\bibitem{Stanley-vol-1}
R. P. Stanley, \newblock Enumerative Combinatorics, Volume I, Cambridge, 1997.

\bibitem{Stanley-Linear-M_G}
R. P. Stanley, \newblock Linear homogeneous diophantine equations and magic labelings of graphs, Duke Mathematical Journal, Vol. 40, September 1973, 607-632.


\bibitem{Stanley-M_G-S_M_G}
R. P. Stanley, \newblock Magic Labelings of Graphs, Symmetric Magic Squares, Systems of Parameters and Cohen-Macaulay Rings, Duke Mathematical Journal, Vol. 43, No.3, September 1976, 511-531.


\bibitem{Stewart-M_G}
B. M. Stewart, \newblock Magic graphs, Canad. J. Math., Vol. 18, (1966), 1031-1059.


\bibitem{Stewart-S_c_g}
B. M. Stewart, \newblock Supermagic complete graphs, Canad. J. Math., Vol. 19, (1967), 427-438.

\bibitem{Xin-three-order}
Guoce Xin. Constructing all magic squares of order three. Discrete Mathematics, Vol. 308(15), (2008), 3393-3398.


\bibitem{Xin-fast-alg}
G. Xin, \newblock A fast algorithm for MacMahon's partition analysis, Electron. J. Combin., Vol. 11 (2004), R58, 20 pp. (electronic).

\bibitem{Xin-euclid-alg}
G. Xin, \newblock A Euclid style algorithm for MacMahon¡¯s partition analysis, J. Combin. Theory A., Vol. 131, (2015), 32-60.


\bibitem{Zhong}
G. Xin, X. Xu, C. Zhang and Y. Zhong, \newblock On magic distinct labellings of simple graphs, arXiv:2107.03161.
%




\end{thebibliography}
\end{document}